\newcommand{\lvt}{\left|\kern-1.35pt\left|\kern-1.3pt\left|}
\newcommand{\rvt}{\right|\kern-1.3pt\right|\kern-1.35pt\right|}
\newtheorem{thm}{Theorem}[section]
\newtheorem{prop}[thm]{Proposition}
\newtheorem{defn}[thm]{Definition}
\theoremstyle{remark}
 \def\la{{\langle}}
 \def\ra{{\rangle}}
 \def\ve{{\varepsilon}}
 \def\d{\mathrm{d}}
 \def\sph{{\mathbb{S}^{d-1}}}
 \def\sP{{\mathsf P}}
 \def\sd{{\mathsf d}}
 \def\t{{\theta}}
 \def\l{{\lambda}}
 \def\o{{\omega}}
 \def\s{\sigma}
 \def\la{{\langle}}
 \def\ra{{\rangle}}
 \def\ve{{\varepsilon}}
 \def\cb{{\mathbf c}}
 \def\CF{{\mathcal F}}
 \def\CH{{\mathcal H}}
 \def\CV{{\mathcal V}}
 \def\BB{{\mathbb B}}
 \def\NN{{\mathbb N}}
 \def\RR{{\mathbb R}}
 \def\SS{{\mathbb S}}
 \def\TT{{\mathbb T}}
 \def\VV{{\mathbb V}}
 \def\XX{{\mathbb X}}
      \def\proj{\operatorname{proj}}
\def\lla{\langle{\kern-2.5pt}\langle}      
\def\rra{\rangle{\kern-2.5pt}\rangle}
\newcommand{\wh}{\widehat}
\def\f{\frac}
\begin{document}

\title{Positive definite functions on a regular domain}
\author{Martin Buhmann}
\address{Justus-Liebig University, Lehrstuhl Numerische Mathematik, 35392 Giessen, Germany}
\email{Martin.Buhmann@math.uni-giessen.de}
\author{Yuan~Xu}
\address{Department of Mathematics, University of Oregon, Eugene, 
OR 97403--1222, USA}
\email{yuan@uoregon.edu} 
\thanks{The second author thanks the Alexander von Humboldt Foundation for 
supporting his visit to U. Giessen, during which the work was carried out; he was partially supported by 
Simons Foundation Grant \#849676.}
\date{\today}  
\subjclass[2010]{33C45, 41A05, 41A63, 43A90}
\keywords{Positive definite function, strictly positive definite function, unit sphere, unit ball, conic surface, 
hyperbolic domains, simplex.}

\begin{abstract} 
We define positive and strictly positive definite functions on a domain and study these functions on a list 
of regular domains. The list includes the unit ball, conic surface, hyperbolic surface, solid hyperboloid, 
and simplex. Each of these domains is embedded in a quadrant or a union of quadrants of the unit sphere
by a distance-preserving map, from which characterizations of positive definite and strictly positive definite 
functions are derived for these regular domains.
\end{abstract}

\maketitle

\section{Introduction} 
\setcounter{equation}{0}

A central topic in the approximation of functions in several variables
is the approximation by interpolation. Given real-valued data, call them
$b_i$, $i=1,2,\ldots,N$, and a set of distinct points $\{x_1, x_2,
\ldots, x_N\}\subset\Omega\subset\RR^d$, the task of finding an interpolant from a
given linear space, e.g., spanned by shifts or otherwise of a kernel-function, is
to find $g:\Omega\mapsto\RR$ with 
$$ 
g(x_i)=b_i,\qquad\forall\;1\leq i\leq N.
$$
This $g$ can for instance be formed as a sum of univariate basis functions $\varphi$,
say, combined with a distance function
$\sd_\Omega:\Omega\times\Omega\mapsto\RR$: Then it reads
$$
g(x)=\sum_{i=1}^N\lambda_i \varphi(\sd_\Omega(x,x_i)),\qquad x\in \Omega.
$$
Therefore, the interpolation problem is uniquely solvable as soon as the 
interpolation matrix $[\varphi(\sd_\Omega(x_i,x_j))]_{i,j=1}^N$ is
non-singular. Clearly, if $\varphi$ is such that this matrix is positive
definite, the said condition is fulfilled, but other choices are
possible (see, e.g., \cite{MJQI}). If no other properties of $\Omega$
are required other than that it be a subset of $\RR^d$, the Euclidean
distance function will be an obvious choice for the distance function,
and then the whole construction comes down to interpolation with the
so-called radial basis functions (\cite{MB}). 

Interpolation on spheres and other regular domains is a special case of 
the great theoretical and practical interest of the above general {\it Ansatz\/} 
and the choice of the distance function becomes particularly critical. 
For the unit sphere $\SS^d$, the distance function is the geodesic distance 
$$
  \sd_{\SS}(\xi, \eta) = \arccos \la \xi,\eta\ra, \qquad \xi, \eta \in \SS^d, 
$$
and PDFs on the unit sphere have been studied extensively. The aim of the present 
paper is to study positive definite functions on a bounded regular domain, more general than 
the unit sphere. 

Let $\Omega$ be a bounded domain in $\RR^d$ and let $\sd_\Omega(\cdot,\cdot)$ be a 
distance function on $\Omega$. We assume that the domain is dilated so that $\sd_\Omega(x,y) \le \pi$ for 
all $x,y \in \Omega$. Let $f: [-1,1] \mapsto \RR$ be a continuous function. For $N \in \NN$, let 
$$
      \Xi_N = \{x_1,\ldots, x_N\} \subset \Omega
$$
be a set of distinct points in $\Omega$. We denote by $f[\Xi_N]$ the $N\times N$ matrix
$$
   f[\Xi_N] = \left[ f(\cos \sd_\Omega (x_i,x_j)) \right]_{i,j =1}^N.  
$$
In other words, $\varphi$ from the above becomes $f(\cos(\cdot))$.

\begin{defn}\label{defn:PDF}
A continuous function $f: [-1,1] \mapsto \RR$ is called a positive definite function (PDF) on $\Omega$ if 
for every $N\in \NN$ and set of $N$ distinct points $\Xi_N =
\{x_1,\ldots, x_N\} \subset \Omega$, the  
$N\times N$ matrix $f[\Xi_N]$ is nonnegative definite, and it is called a strictly positive definite function (SPDF)
if $f[\Xi_N]$ is positive definite. 
\end{defn}

When $\Omega$ is the unit sphere, PDFs are first studied in the seminal work \cite{S}, where the 
space of PDFs is characterized as the function $f$ whose Fourier-Gegenbauer coefficients $\hat f_n$ 
are all non-negative. Motivated by the problem of scatted data interpolation, SPDFs on the unit sphere were  
first considered in \cite{XC}, which gives as a sufficient condition that all $\hat f_n >0$, and they are later 
characterized in \cite{CMS} as the function $f$ that has infinitely many even and infinitely many odd positive 
$\hat f_n$. These functions are important in several topics in approximation theory, computational analysis, 
probability, and statistics. They have generated considerable interest and led to numerous studies and 
generalizations. We refer to \cite{BM, BPP, BJ1, BJ2, CMP, GM, GMPa, GMPe, MO} for several recent 
studies that generalize the concept of PDFs and SPDFs in several different directions. There are many 
more papers on PDF functions and their applications. We refer to \cite{Gne} for a survey that includes 
discussions on applications in probability and other fields and to \cite{HGM} for some applications in 
computational analysis. 

In the present paper, we consider regular domains $\Omega$ that include the unit ball, conic surface, hyperbolic surface, 
solid hyperboloid, and simplex, on which PDFs are defined via their
respective distance functions.

It turns out that each of these domains can be mapped to a portion of the unit sphere via a distance-preserving
map, which leads us to consider PDFs on quadrants of the unit spheres, such as the positive quadrant or upper 
hemisphere. This provides a large class of PDFs on the regular domain. Our main result provides a 
characterization of SPDFs on these quadrants and each of the regular domains in our list among this class
of PDFs. 

One of the essential tools for studying PDFs on the unit sphere is the addition formula of spherical harmonics, 
which gives a closed-form formula for the reproducing kernel of the space of spherical harmonics of a fixed 
degree. For a regular domain, the analog of the addition formula is a closed-form formula for the reproducing 
kernel of the space of orthogonal polynomials of fixed degree, where the orthogonality is defined in 
$L^2(\Omega, w)$, with $w$ being an analogy of the Chebyshev weight function on the domain. For each
of our regular domains, an analog of the addition formula is explicitly given, which turns out to be an average
of a PDF function over reflections of a base point. The latter resembles the image method for finding the 
Green's function for the Poisson equation over a quadrant in $\RR^d$.

The paper is organized as follows. The next section is preliminary, where we review background materials and
the results on the unit sphere. We discuss characterizations of PDFs and SPDFs on quadrants of the unit sphere 
in the third section and on our list of regular domains in the fourth section. 

\section{Preliminaries}
\setcounter{equation}{0}
We review the results on the unit sphere in the first subsection, the reproducing kernels of orthogonal 
polynomials in the second subsection, and state a couple of simple properties of PDFs in the third 
subsection.

\subsection{PDFs on the unit sphere}
The characterization of PDFs on the unit sphere relies on the Fourier-Gegenbauer expansions, which we 
review first. For $\l > -\f12$, let the weight function be 
$$
w_\l(t) = (1-t^2)^{\l - \f12}, \quad -1 \le t \le 1. 
$$ 
The Gegenbauer polynomials $C_n^\l$ are orthogonal polynomials of degree $n$ that satisfy 
$$
  c_\l \int_{-1}^1 C_n^\l(t) C_m^\l(t) w_\l(t) \,\d t = h_n^\l \delta_{m,n}, \qquad h_n^\l = \frac{\l}{n+\l} C_n^\l(1),
$$
where $c_\l = \Gamma(\l+\f12)\Gamma(\f12)/\Gamma(\l)$ and $C_n^\l(1) =
(2\l)_n/n!$. The expression $(\cdot)_n$ denotes the Pochhammer symbol. The 
Fourier-Gegenbauer series of $f \in L^2([-1,1],w_\l)$ is defined by 
\begin{equation} \label{eq:FourierGegen}
  f(\cos \t) = \sum_{n=0}^\infty \hat f_n^\l C_n^\l (\cos\t),  \quad \l > -\tfrac12,
\end{equation}
where the coefficients $\hat f_n^\l$ are given by 
$$
     \hat f_n^\l = [h_n^\l]^{-\f12} c_\l \int_0^\pi f(\cos\t) C_n^\l (\cos \t) (\sin\t)^{2 \l} \d \t. 
$$

We can now state the characterization of PDFs on the unit sphere in \cite{S}.

\begin{thm}\label{thm:PDF-Sphere}
Let $d\ge 2$ and $\l = \f{d-1}{2}$. A continuous function $f: [-1,1]\mapsto \RR$ is positive definite on $\SS^d$ 
if and only if $\hat f_n^\l \ge 0$ for all $n$, in which case the series \eqref{eq:FourierGegen} converges 
absolutely and uniformly to $f(\cos\t)$ on $[0,\pi]$. The most general $f$ that is positive definite on $\SS^d$ 
is therefore given by the expansion  
\begin{equation}\label{eq:series-f}
 f(\cos \t) = \sum_{n=0}^\infty a_n C_n^{\f{d-1}2} (\cos\t), \qquad a_n \ge 0, \quad \forall n \in \NN_0.
\end{equation}
\end{thm}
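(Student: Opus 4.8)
The plan is to prove Schoenberg's characterization of positive definite functions on $\SS^d$ by splitting it into the two implications and the convergence statement. For the direction that nonnegative Gegenbauer coefficients force positive definiteness, I would invoke the \emph{addition formula} for spherical harmonics: for $\xi,\eta\in\SS^d$ one has
$$
 C_n^{\f{d-1}2}(\la\xi,\eta\ra) = \frac{C_n^{\f{d-1}2}(1)}{\dim\CH_n^d}\sum_{k} Y_{n,k}(\xi)\overline{Y_{n,k}(\eta)},
$$
where $\{Y_{n,k}\}_k$ is an orthonormal basis of the space $\CH_n^d$ of spherical harmonics of degree $n$. This exhibits $C_n^{\f{d-1}2}(\la\cdot,\cdot\ra)$ as a reproducing kernel, hence a nonnegative definite kernel: for any points $\Xi_N=\{\xi_1,\dots,\xi_N\}$ and scalars $c_1,\dots,c_N$,
$$
 \sum_{i,j=1}^N c_i c_j\, C_n^{\f{d-1}2}(\la\xi_i,\xi_j\ra)
 = \frac{C_n^{\f{d-1}2}(1)}{\dim\CH_n^d}\sum_{k}\Bigl|\sum_{i=1}^N c_i Y_{n,k}(\xi_i)\Bigr|^2 \ge 0.
$$
If $f(\cos\t)=\sum_n a_n C_n^{\f{d-1}2}(\cos\t)$ with $a_n\ge0$ and the series converges (appropriately), then $f[\Xi_N]=\sum_n a_n\,[C_n^{\f{d-1}2}(\la\xi_i,\xi_j\ra)]_{i,j}$ is a convergent nonnegative combination of nonnegative definite matrices, hence nonnegative definite; so $f$ is a PDF on $\SS^d$.

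For the converse, suppose $f$ is a PDF on $\SS^d$. The strategy is to integrate the nonnegative definiteness condition against a well-chosen family of point configurations to isolate a single Gegenbauer coefficient. Concretely, I would apply the defining inequality $\sum_{i,j} c_i c_j f(\la\xi_i,\xi_j\ra)\ge0$ in a limiting/continuous form: replace the finite sum by an integral $\int_{\SS^d}\int_{\SS^d} f(\la\xi,\eta\ra)\,g(\xi)\overline{g(\eta)}\,d\sigma(\xi)d\sigma(\eta)\ge0$ for continuous $g$ (justified by approximating the integral by Riemann sums over distinct points, using continuity of $f$). Taking $g=Y_{n,k}$ a spherical harmonic and using the Funk–Hecke formula, the double integral evaluates to a positive multiple of $\hat f_n^\l$ (up to the normalization in the statement), whence $\hat f_n^\l\ge0$ for every $n$.

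The convergence claim is the remaining piece. Once we know $\hat f_n^\l\ge0$, absolute and uniform convergence of $\sum_n \hat f_n^\l C_n^\l(\cos\t)$ follows from a classical positivity argument: evaluate the (Abel or Cesàro) means of the Fourier–Gegenbauer series at $\t=0$, where $C_n^\l(1)=(2\l)_n/n!>0$, to get $\sum_n \hat f_n^\l [h_n^\l]^{1/2} C_n^\l(1)\le f(1)<\infty$ by monotone convergence and continuity of $f$ at $1$; since $|C_n^\l(\cos\t)|\le C_n^\l(1)$, the Weierstrass $M$-test then gives absolute and uniform convergence on $[0,\pi]$, and the limit must equal $f(\cos\t)$ by uniqueness of the expansion for continuous functions. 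The ``most general'' statement is then immediate by setting $a_n = \hat f_n^\l [h_n^\l]^{-1/2}\ge 0$.

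I expect the main obstacle to be the converse direction: passing rigorously from the discrete nonnegative-definiteness of $f[\Xi_N]$ to the integral inequality against arbitrary continuous $g$, and then extracting $\hat f_n^\l\ge0$ cleanly. The discrete-to-continuous step needs a careful approximation argument (Riemann sums with distinct nodes, uniform continuity of $f$ on $[-1,1]$, and boundedness of $g$), and the extraction step relies on the Funk–Hecke formula together with the fact that $\CH_n^d$ is nontrivial for all $n$ when $d\ge2$. The forward direction and the convergence statement are, by contrast, essentially bookkeeping once the addition formula and the value $C_n^\l(1)>0$ are in hand.
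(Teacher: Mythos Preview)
The paper does not actually prove this theorem: it is stated in the preliminaries (Section~2.1) as Schoenberg's classical result, with a citation to \cite{S}, and no proof is given. What the paper does provide is a brief indication of the forward direction---immediately after the addition formula \eqref{eq:additionS} it notes that
\[
\cb^{\mathsf T} C_n^{\frac{d-1}{2}}[\Xi_N]\,\cb
= \frac{\l}{n+\l}\sum_{\ell}\Bigl[\sum_{k} c_k Y_\ell^n(\xi_k)\Bigr]^2\ge 0,
\]
which is exactly your argument for $C_n^{\frac{d-1}{2}}\in\Phi(\SS^d)$---and, inside the proof of Theorem~\ref{thm: PDF-Sk}, it alludes to the converse for $\SS^d$ being ``established by using the identity'' that rewrites $\int_{\SS^d} f(\la\xi,\eta\ra)\,\d\sigma(\xi)$ as a double integral over $\SS^d\times\SS^d$, which is the integral-positivity idea underlying your Funk--Hecke extraction.

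Your plan is the standard Schoenberg argument and is correct in outline. Two small points worth tightening: (i) in the convergence step, the factor $[h_n^\l]^{1/2}$ you inserted does not belong---with the paper's normalization the series is $\sum_n \hat f_n^\l C_n^\l$, and evaluating a summability mean at $\t=0$ gives control of $\sum_n \hat f_n^\l C_n^\l(1)$ directly; (ii) for the discrete-to-continuous passage you flag as the main obstacle, the paper (in the proof of Theorem~\ref{thm: PDF-Sk}) uses the device of applying a positive cubature rule of sufficiently high degree and then invoking Weierstra\ss{} density, which is a clean alternative to Riemann-sum approximation and may streamline your write-up.
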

  
For the strictly positive definite functions, we only need to consider $f$ that has nonnegative Fourier-Gegenbauer 
coefficients since $\mathrm{S}\Phi(\SS^d) \subset \Phi(\SS^d)$. For such $f$, we can write \eqref{eq:FourierGegen} as 
\begin{equation} \label{eq:series_CF}
  f = \sum_{n=0}^\infty \hat f_n^\l C_n^\l = \sum_{n \in \CF} \hat f_n^\l C_n^\l, 
\end{equation}
where $\CF$ is the index set defined by
\begin{equation} \label{eq:CF}
  \CF :=\{n \in \NN_0: \hat f_n >  0\}.  
\end{equation}
The characterization of PDFs on the unit sphere in \cite{CMS} is stated as follows.

\begin{thm}\label{thm:SPDF-Sphere}
Let $d\ge 2$ and $\l = \f{d-1}{2}$. A continuous function $f: [-1,1]\mapsto \RR$ is strictly positive definite
if and only if $\CF$ contains infinitely even integers and infinitely odd integers. 
\end{thm}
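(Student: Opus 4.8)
The plan is to reduce the problem on the sphere $\SS^d$ to a positivity statement about Gegenbauer expansions, using the classical addition formula for spherical harmonics together with Theorem~\ref{thm:PDF-Sphere}. Write $N=\dim\CH_n^d$ for the dimension of the space of degree-$n$ spherical harmonics on $\SS^d$, and recall that the reproducing kernel of $\CH_n^d$ equals a constant multiple of $C_n^\l(\la\xi,\eta\ra)$ with $\l=\frac{d-1}2$; in particular, for any finite set $\Xi_M=\{x_1,\dots,x_M\}\subset\SS^d$ and any nonzero vector $c=(c_1,\dots,c_M)$,
\begin{equation*}
 \sum_{i,j=1}^M c_i c_j\, C_n^\l(\la x_i,x_j\ra) \;=\; \text{const}_n \sum_{k} \Bigl|\sum_{i=1}^M c_i Y_k(x_i)\Bigr|^2 \;\ge\; 0,
\end{equation*}
with equality precisely when $\sum_i c_i Y_k(x_i)=0$ for every spherical harmonic $Y_k$ of degree $n$. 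Thus, given $f$ with nonnegative Fourier--Gegenbauer coefficients as in \eqref{eq:series_CF}, the quadratic form $c^{\mathsf T} f[\Xi_M] c$ is a sum over $n\in\CF$ of these nonnegative terms, and $f[\Xi_M]$ fails to be positive definite exactly when there is a nonzero $c$ that is simultaneously $\CH_n^d$-orthogonal (in the sense above) for \emph{all} $n\in\CF$.

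The first direction (sufficiency) is the easier one: assuming $\CF$ contains infinitely many even and infinitely many odd integers, I would show no such nonzero $c$ exists. Suppose $c\neq 0$ and $\sum_i c_i Y(x_i)=0$ for every $Y\in\bigcup_{n\in\CF}\CH_n^d$. Consider the signed measure $\mu=\sum_i c_i\delta_{x_i}$ on $\SS^d$; then $\int Y\,d\mu=0$ for all $Y$ of degree $n\in\CF$. The key point is that $\bigoplus_{n\in\CF}\CH_n^d$, when $\CF$ has infinitely many even and infinitely many odd members, is dense in $C(\SS^d)$ — this is precisely the content (or an immediate consequence) of the density/Stone--Weierstrass-type argument underlying Theorem~\ref{thm:SPDF-Sphere}; concretely, products of such harmonics and the constant generate the whole polynomial algebra because one can reach every parity and, via the recurrence for Gegenbauer polynomials (or the identity $t\,C_n^\l = a_n C_{n+1}^\l + b_n C_{n-1}^\l$), climb to all degrees. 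Density forces $\mu=0$, hence all $c_i=0$, a contradiction; so $f[\Xi_M]$ is positive definite for every $\Xi_M$, i.e.\ $f$ is SPDF.

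For necessity (the contrapositive), assume $\CF$ omits all large even integers, or all large odd integers; by symmetry say $\CF\subset\{0,1,\dots,2m-1\}\cup(\text{odd integers})$ for some $m$, i.e.\ $\CF$ contains only finitely many even integers. Then $\bigoplus_{n\in\CF}\CH_n^d$ is contained in a space that does not separate antipodal points in the appropriate graded sense, and I would construct an explicit nonzero annihilating measure supported on finitely many points: pick a generic pair of antipodal or near-antipodal configurations and use a dimension count — the span of $\{(Y(x_1),\dots,Y(x_M)) : Y\in\bigoplus_{n\in\CF\cap[0,n_0]}\CH_n^d\}$ has bounded dimension in the even part while $M$ can be taken large, so a nontrivial linear relation $c$ with $\sum_i c_i Y(x_i)=0$ for all relevant $Y$ exists. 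One then checks the \emph{odd}-degree harmonics vanish on this $c$ too by choosing the points to be antipodally symmetric with a sign pattern that kills odd harmonics (since $Y$ odd satisfies $Y(-x)=-Y(x)$): take $x_{i}$ and its antipode $-x_i$ with equal coefficients, so odd-degree contributions cancel automatically, and then impose the finitely many even-degree linear conditions, which is possible once the number of antipodal pairs exceeds $\sum_{n\in\CF,\,n\text{ even}}\dim\CH_n^d$. This yields $c\neq0$ with $c^{\mathsf T}f[\Xi_M]c=0$, so $f$ is not SPDF.

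The main obstacle I anticipate is the necessity direction, specifically making the annihilating-measure construction genuinely work rather than merely plausible: one must ensure the chosen points are distinct and in general position so that the \emph{only} constraints are the finitely many even-degree ones (no accidental degeneracies from the odd part), and one must handle the two cases — finitely many even indices versus finitely many odd indices — which are not quite symmetric because the constant function (degree $0$) is always even. For the odd-deficient case the cancellation trick uses symmetric point pairs with \emph{opposite} signs to kill even harmonics of positive degree while the constant term needs separate care, so I would treat $0\in\CF$ explicitly. I would also need the observation that continuity of $f$ together with nonnegativity of the $\hat f_n^\l$ guarantees the Gegenbauer series converges absolutely and uniformly (from Theorem~\ref{thm:PDF-Sphere}), which legitimizes the termwise manipulation of the quadratic form above; this is a routine but essential bookkeeping step.
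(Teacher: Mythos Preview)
The paper does not actually prove Theorem~\ref{thm:SPDF-Sphere}; it is quoted from \cite{CMS} as a preliminary result. The nearest thing to a proof in the paper is the argument for Theorem~\ref{thm: SPDF-Sk}, which the paper itself describes as a modification of the sphere proof given in \cite[Theorem~14.3.6]{DaiX}, so that is the relevant benchmark.

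Your necessity direction is essentially correct and is the standard argument: antipodal pairs carrying equal (resp.\ opposite) coefficients annihilate all odd (resp.\ all positive even) harmonics automatically, and a dimension count disposes of the finitely many remaining degrees in $\CF$.

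The sufficiency direction, however, has a genuine gap. Your central claim, that $\bigoplus_{n\in\CF}\CH_n^{d+1}$ is dense in $C(\SS^d)$ whenever $\CF$ contains infinitely many even and infinitely many odd integers, is false: take $\CF=\{1,2,3,\dots\}$, which satisfies the hypothesis, yet the uniform closure of that span consists only of mean-zero functions and does not contain the constants. Your justification is also circular (you appeal to ``the density/Stone--Weierstrass-type argument underlying Theorem~\ref{thm:SPDF-Sphere}'', i.e.\ to the theorem you are proving), and the remark about products and the three-term recurrence does not help, since the quadratic-form decomposition uses only the \emph{linear} span of the $\CH_n^{d+1}$, not the algebra they generate. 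What you actually need is the weaker assertion that no nonzero \emph{finitely supported} signed measure annihilates $\bigoplus_{n\in\CF}\CH_n^{d+1}$; this is true, but establishing it is precisely the substance of the theorem. The route in \cite{CMS,DaiX}, reproduced in the paper's proof of Theorem~\ref{thm: SPDF-Sk}, avoids density entirely: one applies the Gegenbauer addition formula \eqref{eq:addtionC} to factor $C_n^\l(\la\xi_i,\xi_j\ra)$ through terms $(\sin\t_i)^k(\sin\t_j)^k\,C_k^{\l-\f12}(\la\xi_i',\xi_j'\ra)$, chooses a pole so that the $\cos\t_i$ are distinct, and then inducts on $N$ by sending $n\to\infty$ through $\CF$ to peel off the point with maximal $\sin\t_i$. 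On the full sphere one must additionally handle antipodal pairs $\xi_i=-\xi_j$, and it is exactly there that infinitely many indices of \emph{each} parity are required rather than merely an infinite $\CF$.
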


The proofs of these theorems rely on the properties of spherical harmonics and the Gegenbauer polynomials,
which will also be needed in the next section when we consider PDFs on quadrants of the unit sphere. We 
recall the necessary definitions below. 

When $\l = \frac{d-1}{2}$, the Gegenbauer polynomials are often called ultraspherical polynomials since 
they are associated with spherical harmonics. Let $\CH_n^{d+1}$ be the space of spherical harmonics of 
degree $n$, which are homogeneous polynomials in $d+1$ variables restricted to the sphere $\SS^d$.  
Let $\{Y_\ell^n: 1 \le \ell \le \dim \CH_n^{d+1}\}$ be an orthonormal basis of $\CH_n^d$. Then 
$$
 \frac{1}{\o_d} \int_{\SS^d} Y_\ell^n(\xi) Y_{\ell'} ^m(\xi) \,\d \s
 (\xi) = \delta_{\ell, \ell'} \delta_{n,m},  
$$
where $\d \s$ is the surface measure of $\SS^d$ and $\o_d$ denotes the surface area of $\SS^d$. 
An orthonormal basis of $\CH_n^{d+1}$ can be found explicitly by using spherical coordinates; see, 
for example, \cite[p. 116]{DX}. The reproducing kernel of the space $\CH_n^{d+1}$ is a function 
$\sP_n(\cdot,\cdot)$ such that 
$$
  \frac{1}{\o_d} \int_{\SS^d} \sP_n(\xi,\eta) Q(\eta) \,\d \s(\eta) = Q(\xi), \qquad \forall Q\in \CH_n^d.
$$
Let $\{Y_\ell^n: 1 \le \ell \le \dim \CH_n^d\}$ be an orthonormal basis of $\CH_n^d$. One of the most important 
properties of the spherical harmonics is their {\it addition formula}, which gives
a closed-form formula for the reproducing kernel (cf. \cite[(1.2.8)]{DaiX})  
\begin{equation}\label{eq:additionS}
  \sP_n(\xi,\eta) =  \sum_{\ell =1}^{\dim \CH_n^{d+1}} Y_\ell^n(\xi)Y_\ell^n(\eta) 
  = Z_n^{\f{d-1}{2}}(\la \xi,\eta\ra), \quad \forall \xi,\eta\in \SS^d,
\end{equation}
where $Z_n^\l$ is a constant multiple of the Gegenbauer polynomial, 
\begin{equation}\label{eq:Zn}
   Z_n^\l(t) := \frac{n+\l}{\l} C_n^\l (t), \qquad \l \ge - \tfrac 12.
\end{equation}
We use the notation $\la\cdot,\cdot\ra$ for inner products here.

As an immediate consequence of \eqref{eq:additionS}, for example, it leads to 
$$
  \cb^\mathsf{T} C_n^\f{d-1}{2}[\Xi_N] \cb =  \frac{\l}{n+\l} \sum_{\ell =1}^{\dim \CH_n^{d+1}} 
        \left[\sum_{k=1}^N c_\ell Y_{\ell}^n(\xi_k)\right]^2 \ge 0
$$
for any $\cb \in \RR^N$, which shows that $C_n^{\f{d-1}{2}} \in \Phi(\SS^d)$ for each $n \ge 0$. 
Here, we use the notation $\xi_k$ instead of $x_k$ for our elements of the set $\Xi_N$ as long as 
we are on the sphere.

Writing in terms of the orthonormal basis derived via the separation of variables in spherical coordinates,
\eqref{eq:additionS} leads to the addition formula for the Gegenbauer polynomials. For $\l > 0$, the latter states
\cite[(9.85)]{AAR} 
\begin{equation}\label{eq:addtionC}
 C_n^\l (\cos \t \cos \phi + u \sin \t \sin \phi) = \sum_{k=0}^n b_{k,n}^\l Q_{k,n}^\l(\t)  Q_{k,n}^\l(\phi) C_k^{\l-\f12} (u),
\end{equation}
where the coeffients $b_{k,n}^\l$ are positive constants and 
$$
  Q_{k,n}^\l (\t) = (\sin \t)^k C_{n-k}^{\l+k}(\cos \t), \quad 0 \le k \le n. 
$$

\subsection{Reproducing kernels for orthogonal polynomials}
Let $w$ be a non-negative weight function defined on the compact domain $\Omega \subset \RR^d$. 
Then the bilinear form with weight 
$$
  \la f,g\ra_w = \int_\Omega f(x) g(x) w(x) \,\d x 
$$
defines an inner product of $L^2(\Omega, w)$. We consider orthogonal polynomials for this inner product, 
which are well defined among polynomials restricted to $\Omega$. Notice that when $\Omega$ is a quadratic
surface defined by $\phi(x)=0$, such as the unit sphere, the polynomials restricted on $\Omega$ belong to 
the polynomial ring $\RR[x]/ \la \phi\ra$. 

Let $\CV_n(\Omega)$ be the space of orthogonal polynomials of degree at most $n$. 

\begin{defn} \label{def:reprod}
A function $\sP_n(w;\cdot,\cdot)$ is called a reproducing kernel of $\CV_n(\Omega)$ if 
\begin{enumerate}
\item $\sP_n(w; \cdot,\cdot)$ is symmetric: $\sP_n(w;  x,y) = \sP_n(w; y,x)$; 
\item $\sP_n(w; \cdot,y) \in \CV_n(\Omega)$ for all $y\in \Omega$;  
\item $\sP_n(w; \cdot,\cdot)$ reproduces $\CV_n(\Omega)$ in the sense that
\begin{equation}\label{eq:reprod}
   \int_\Omega \sP_n(w; x,y) Q(y) w(y) \,\d y = Q(x), \qquad Q \in \CV_n(\Omega). 
\end{equation}
\end{enumerate}
\end{defn}
The kernel is uniquely determined by these properties.
Furthermore, if $\{P_\ell^n \}_{\ell=1}^{a(n,d)}$ is an orthonormal
basis of $\CV_n(\Omega)$, then 
$$
    \sP_n (w;x,y) = \sum_{k=1}^{a(n,d)} P_k^n(x) P_k^n(y), \qquad a(n,d) = \dim \CV_n(\Omega).
$$
We refer to \cite[Section 3.6]{DX} for further properties. The reproducing kernels are important for understanding
the Fourier orthogonal expansion. Indeed, for $f\in L^2(\Omega,w)$, its Fourier orthogonal series is defined by
$$
  f = \sum_{n=0}^\infty \sum_{\ell =1}^{a(n,d)} \wh f_{\ell,n} P_\ell^n, \quad \hbox{where} \quad
      \wh f_{\ell,n} = \la f, P_\ell^n \ra_w.
$$ 
Then $\sP_n(w;\cdot,\cdot)$ is the kernel of the projection operator $\proj_n: L^2(\Omega,2) \mapsto \CV_n(\Omega)$,
$$
  \proj_n f (x)=  \sum_{\ell =1}^{a(n,d)} \wh f_{\ell,n} P_\ell^n (x) =
  \int_\Omega f(y) \sP_n(w; x,y) w(y) \,\d y,\qquad x\in\Omega.
$$

For the analysis of the Fourier orthogonal expansion, a closed-form
formula of the reproducing kernel $\sP_n(w;\cdot,\cdot)$ 
is often essential. The addition formula \eqref{eq:additionS} for the unit sphere provides one of the most 
elegant formulas in this regard. We will encounter similar closed-form formulas on our regular domains and 
will refer to them as addition formulas as well. 

Finally, a word about our notation. Namely, we shall use different fonts for the kernels on different types of domains: 
$\sP_n$ if $\Omega$ is a hypersurface and $\sP_n$ if $\Omega$ is a solid domain.

\subsection{PDFs on a domain}
For convenience, we denote $\Phi(\Omega)$ the space of all positive definite functions on $\Omega$ and by 
$\mathrm{S}\Phi(\Omega)$ the space of all strictly positive definite functions on $\Omega$ throughout the 
rest of the paper. 

Several properties for $\Phi(\Omega)$ follow readily from the theory of matrices. 
\begin{enumerate} 
\item If $f_j \in \Phi(\Omega)$ and $c_j \ge 0$ for $1\le j \le n$,
  then $f= c_1 f_1 + \cdots + c_n f_n \in \Phi(\Omega)$. 
\item If $f\in \Phi(\Omega)$ and $g \in \Phi(\Omega)$, then $f g \in \Phi(\Omega)$,
\end{enumerate}
and both properties hold for $\mathrm{S}\Phi(\Omega)$. While the first one is evident from the definition of PDFs, 
the second one follows from Schur's product theorem, which states that the Hadamard product (or entrywise
product) of two positive definite matrices is also a positive definite matrix.

\section{Positive definite functions on quadrants of the unit sphere}
\setcounter{equation}{0}

In this section, we study PDFs and SPDFs on quadrants of the unit sphere. For a positive integer $k$, 
$1 \le k \le d$, we define $\SS^d_{k, +}$ by 
$$
  \SS_{k,+}^d = \left\{\xi \in \SS^d: \xi_k > 0, \ldots, \xi_{d+1} > 0\right\}. 
$$ 
In particular, $\SS_{d+1,+}^d$ is the upper hemisphere of $\SS^d$ and $\SS_{1,+}^d$ is the positive 
quadrant of $\SS^d$. We consider $\Phi(\SS^d_{k,+})$ with $\SS_{k,+}^d$ equipped with the geodesic 
distance $\sd_{\SS^d_{k,+}}(\cdot,\cdot) =\sd_\SS(\cdot,\cdot)$ inherited from $\SS^d$. For the PDF 
functions, we allow the data set $\Xi_N$ to be the subset of $\overline{\SS}_{k,+}^d$ so that it can
include the points on the boundary of $\SS_{k,+}^d$. It follows that $\Phi(\SS^d) \subset  \Phi(\bar \SS^d_{k,+})$. 

The class of PDFs for $\SS^d_{k,+}$ is again given in terms of the Fourier-Gegenbauer series 
\eqref{eq:FourierGegen} and is similar to that of Theorem \ref{thm:PDF-Sphere}. We shall not include the 
convergence of the series in the statements in the rest of the paper. 

\begin{thm}\label{thm: PDF-Sk}
Let $d\ge 2$ and $\l = \f{d-1}{2}$. Let $k$ be an integer such that $1\le k \le d+1$. A continuous function 
$f: [-1,1]\mapsto \RR$ is positive definite on $\bar \SS_{k,+}^d$ if $\hat f_n^\l \ge 0$ for all $n$. Moreover,
if $k=d+1$, then it is also necessary and, in particular, $\Phi(\bar \SS_{d+1,+}) = \Phi(\SS^d)$. 
\end{thm}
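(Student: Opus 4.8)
The plan is to prove the sufficiency direction for all $k$ by exhibiting the matrix $f[\Xi_N]$ as nonnegative definite whenever $\hat f_n^\l \ge 0$, and then to handle the necessity (and the identification $\Phi(\SS_{d+1,+}^d) = \Phi(\SS^d)$) by a reflection argument available only when $k = d+1$. For the sufficiency part, note that $\SS_{k,+}^d \subset \SS^d$ and the geodesic distance on $\SS_{k,+}^d$ is the restriction of $\sd_\SS$; hence if $\Xi_N = \{\xi_1,\dots,\xi_N\} \subset \SS_{k,+}^d$, then in particular $\Xi_N \subset \SS^d$, so the matrix $f[\Xi_N] = [f(\cos\sd_\SS(\xi_i,\xi_j))]_{i,j=1}^N$ is exactly the matrix that appears in the sphere case. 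By Theorem~\ref{thm:PDF-Sphere}, $\hat f_n^\l \ge 0$ for all $n$ implies $f$ is positive definite on $\SS^d$, so this matrix is nonnegative definite. Thus $\Phi(\SS^d) \subset \Phi(\SS_{k,+}^d)$, which is precisely the claimed sufficiency; this step is essentially the inclusion already observed in the text and requires no new work.

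For the necessity when $k = d+1$, I would argue that $\Phi(\SS_{d+1,+}^d) \subset \Phi(\SS^d)$, which together with the reverse inclusion gives equality and forces $\hat f_n^\l \ge 0$ via Theorem~\ref{thm:PDF-Sphere}. The key geometric fact is that the antipodal-type reflection $\xi = (\xi_1,\dots,\xi_{d+1}) \mapsto \xi^* := (\xi_1,\dots,\xi_d,-\xi_{d+1})$ maps $\SS^d$ to itself and maps the lower hemisphere bijectively onto the upper hemisphere $\SS_{d+1,+}^d$. Given an arbitrary configuration $\Xi_N = \{\xi_1,\dots,\xi_N\} \subset \SS^d$ and $\cb \in \RR^N$, I want to show $\cb^{\mathsf T} f[\Xi_N] \cb \ge 0$. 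Split the indices into those with $\xi_i$ in the closed upper hemisphere and those in the open lower hemisphere; reflect the latter using $\xi \mapsto \xi^*$ to land in $\SS_{d+1,+}^d$. The obstacle is that reflection does not preserve the geodesic distance between an upper point $\xi_i$ and a reflected point $\xi_j^*$: one has $\la \xi_i, \xi_j^*\ra \ne \la \xi_i,\xi_j\ra$ in general, so $f[\Xi_N]$ is not literally a submatrix of a matrix built from points in $\SS_{d+1,+}^d$.

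To get around this, I would instead use the addition formula \eqref{eq:additionS}: write $f = \sum_n \hat f_n^\l [h_n^\l]^{-1/2} C_n^\l$ formally and observe that it suffices to show each Gegenbauer coefficient is nonnegative, equivalently (by orthogonality) that $c_\l \int_0^\pi f(\cos\t) C_n^\l(\cos\t)(\sin\t)^{2\l}\,\d\t \ge 0$. The cleanest route is a symmetrization: for a configuration on all of $\SS^d$, replace each point by the pair $\{\xi_i, \xi_i^*\}$ with weights $c_i/\sqrt{2}$ — both points now lie in $\SS_{d+1,+}^d$ (after relabeling) — and compute $\sum_{i,j} c_i c_j \tfrac12\big(f(\cos\sd_\SS(\xi_i,\xi_j)) + f(\cos\sd_\SS(\xi_i^*,\xi_j^*)) + f(\cos\sd_\SS(\xi_i,\xi_j^*)) + f(\cos\sd_\SS(\xi_i^*,\xi_j))\big)$. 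Since $\sd_\SS(\xi_i^*,\xi_j^*) = \sd_\SS(\xi_i,\xi_j)$, this quadratic form built from the $2N$ points in $\SS_{d+1,+}^d$ is nonnegative by hypothesis, and it equals $\sum_{i,j} c_i c_j f(\cos\sd_\SS(\xi_i,\xi_j)) + \sum_{i,j} c_i c_j f(\cos\sd_\SS(\xi_i,\xi_j^*))$; the cross term $\sum_{i,j} c_i c_j f(\cos\sd_\SS(\xi_i,\xi_j^*))$ is itself of the form $\cb^{\mathsf T} f[\{\xi_1^*,\dots,\xi_N^*\}]$ reindexed, but this does not immediately vanish. The main obstacle is therefore controlling this cross term; I expect the resolution is to note that averaging over a larger reflection group (reflections in each of the last coordinates, or a random sign flip of $\xi_{d+1}$) kills the odd part and shows that $f$ restricted to $\SS_{d+1,+}^d$ being PDF forces the even-degree structure, after which a separate parity argument using that $\SS_{d+1,+}^d$ is a full-dimensional subset containing an open set handles the odd coefficients too. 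Making this symmetrization argument clean — in particular showing that positive definiteness on the hemisphere "sees" all Gegenbauer degrees and not merely the even ones — is the crux of the necessity proof.
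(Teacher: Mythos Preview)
Your sufficiency argument is correct and matches the paper's. The necessity argument for $k=d+1$, however, has a genuine gap. Your symmetrization step asserts that after replacing each $\xi_i$ by the pair $\{\xi_i,\xi_i^*\}$, ``both points now lie in $\SS_{d+1,+}^d$''; this is false, since the reflection $\xi\mapsto\xi^*=(\xi',-\xi_{d+1})$ sends the open upper hemisphere to the open lower hemisphere, so for a generic $\xi_i$ exactly one of $\xi_i,\xi_i^*$ lies in $\SS_{d+1,+}^d$. The $2N$-point configuration is therefore not contained in the hemisphere, and the PDF hypothesis on $\SS_{d+1,+}^d$ cannot be invoked for it. Even setting this aside, you yourself flag that the cross term $\sum_{i,j}c_ic_j f(\la\xi_i,\xi_j^*\ra)$ is uncontrolled, and your closing ``I expect the resolution is \dots'' is a hope, not an argument: no concrete mechanism is offered that separates even and odd Gegenbauer coefficients or shows that a hemispherical PDF ``sees'' the odd degrees.

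The paper avoids proving $\Phi(\SS_{d+1,+}^d)\subset\Phi(\SS^d)$ configuration by configuration and instead establishes $\hat f_n^\l\ge 0$ directly. Since $C_n^\l\in\Phi(\SS^d)\subset\Phi(\SS_{d+1,+}^d)$, Schur's product theorem gives $fC_n^\l\in\Phi(\SS_{d+1,+}^d)$; and $\hat f_n^\l$ is a positive multiple of $\int_{\SS^d}(fC_n^\l)(\la e,\xi\ra)\,\d\sigma(\xi)$. When $fC_n^\l$ is even (the only nontrivial parity, since $\hat f_n^\l=0$ when the product is odd), a reflection identity rewrites this single integral as a positive multiple of the double integral $\iint_{\SS_{d+1,+}^d\times\SS_{d+1,+}^d}(fC_n^\l)(\la\xi,\eta\ra)\,\d\sigma(\xi)\,\d\sigma(\eta)$, which is nonnegative by approximating with a positive cubature rule on the hemisphere and using that $fC_n^\l$ is PDF there. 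The Schur step is the idea you are missing: it reduces nonnegativity of one Fourier coefficient to positive definiteness of a product function, after which an integral identity replaces all discrete reflection bookkeeping.
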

  
\begin{proof}
Since $\Phi(\SS^d) \subset  \Phi(\bar \SS^d_{k,+})$, the sufficient direction follows immediately from the 
classical result for $\Phi(\SS^d)$. In particular, $C_n^{\l} \in  \Phi(\bar \SS^d_{k,+})$ for all $n \ge 0$. We now 
prove the other direction for $k = d+1$. Assume that $f$ is a PDF on $\SS_{d+1,+}^d$, so we need 
to show that $\hat f_n^\l \ge 0$. For $\Phi(\bar \SS^d)$, this is established by using the identify
\begin{align*}
 \int_{\SS^d} f(\la \xi,\eta \ra)\,\d \s(\xi) = \frac{1}{\o_d} \int_{\SS^d}  \int_{\SS^d} f(\la \xi, \eta \ra)\,\d \s(\eta)\,\d \s(\xi),
\end{align*}
where $\o_d$ denotes the surface area of $\SS^d$. For $1 \le k \le d+1$ and $\xi \in \SS^d$, we define 
$\xi_{k}^+:= (\xi_1,\ldots,\xi_{k-1}, |\xi_k|, \ldots, |\xi_{d+1}|) \in \bar \SS_{k,+}^d$ and similarly for $\eta_{k}^+$.


The above identity is proved by using the invariant of $\d \s$ on $\SS^d$, which holds if $\eta$ is replaced by 
$\eta_{k}^+$, since there is an rotation $h$ such that $h \eta_k^+ = e$, so that 
\begin{align*}
  \int_{\SS^d}\int_{\SS^d} f(\la \xi,\eta_{k}^+ \ra)\,\d \s(\eta) \d \s (\xi)
  & =   \int_{\SS^d} \int_{\SS^d}  f(\la  h \xi, e \ra)d \s(\eta) \d \s (\xi) \\
  & = \o_d \int_{\SS^d} f(\la \xi, e \ra)\,\d \s(\xi) = \o_d \int_{\SS^d} f(\la \xi, \eta_{k}^+ \ra)\,\d \s(\xi),
\end{align*}
where the second step follows from the rotation invariance of $\d \s$. We claim that if $f$ is even and PDF, then
\begin{equation} \label{eq:intSS=intS}
 \o_d \int_{\SS^d} f(\la \xi, e\ra) \d\s(\xi) = 4 \int_{\bar \SS_{d+1,+}^d}\int_{\bar \SS_{d+1,+}^d} f(\la \xi,\eta \ra)\,\d \s(\eta) \d \s (\xi) \ge 0.
\end{equation}
For simplicity, we write $\SS_+^d = \bar \SS_{d+1,+}^d$ in the proof below. Let 
$\SS_{-}^d = \{\xi \in \SS^d: \xi_{d+1} < 0\}$ and $\xi_{d+1,-} = (\xi', -\xi_{d+1})$ for $\xi \in \SS^d$.

Then
\begin{align*}
 \int_{\SS^d}\int_{\bar \SS_{-}^d} f(\la \xi,\eta_{d+1,+} \ra)\d \s(\xi) \d \s (\eta) & = 
  \int_{\SS^d} \int_{\SS_{+}^d} f(\la \xi_{d+1,-},\eta_{d+1,+} \ra) \d \s (\xi) \d \s(\eta) \\
  & = \int_{\SS^d} \int_{\SS_{+}^d} f(- \la \xi,\eta_{d+1}^+ \ra)\, \d \s (\xi) \d \s(\eta) \\
   & = \int_{\SS^d} \int_{\SS_{+}^d} f(\la \xi ,\eta_{d+1}^+ \ra)\, \d \s (\xi) \d \s(\eta),
\end{align*}
where the second identity follows from changing variables $\eta_j \mapsto -\eta_j$ for $1 \le j \le d$ and
the last identity uses $f$ being even. Putting these integrals together, we have proved
\begin{align*}
  \int_{\SS^d}\int_{\SS^d} f(\la \xi,\eta_{k}^+ \ra)\,\d \s(\eta) \d \s (\xi) = 2 
    \int_{\SS^d} \int_{\SS_{+}^d} f(\la \xi ,\eta_{d+1}^+ \ra)\, \d \s (\xi) \d \s(\eta),
\end{align*}
from which the identity of \eqref{eq:intSS=intS} follows readily, since $\eta_{d+1}^+$ is even in its $(d+1)$st variable. 
Since $f \in \Phi(\bar \SS_{d+1,+}^d)$, applying a positive cubature rule of degree $n$ on $\bar \SS_{d+1,+}^d$ shows that 
the right-hand side is nonnegative for all polynomials of degree $\le n$ and, by the Weierstra\ss{}  theorem 
about the density of polynomials, for all continuous functions. Hence, we have proved that the integral in 
\eqref{eq:intSS=intS} is non-negative. 

Now, for $d \ge 2$, a change of variable shows that 
$$
    \int_{\SS^d} f(\la \xi, \eta \ra) \,\d \s(\eta) = \o_{d-1} \int_{0}^\pi f(\cos \t)(\sin \t)^{d-1} \,\d \t.
$$
Hence, setting $e = e_{d+1}=(0,\ldots,0,1)\in\RR^{d+1}$ as a $(d+1)$-dimensional vector and using the
invariance of $\d \s$, we see that 
$$
      \hat f_n^\l = [h_n^\l]^{-\f12} \frac{1}{\o_d} \int_{\SS^d} f(\la e, \xi \ra) C_n^\l(\la  e,\xi \ra) \,\d \s(\xi).
$$
Since both $f$ and $C_n^\l$ are in $\Phi(\bar \SS_{d+1,+}^d)$, so is their product. Hence, by \eqref{eq:intSS=intS},
we conclude that $\hat f_n^\l \ge 0$. This completes the proof. 
\end{proof}
  
We do not know if there is any element in $\Phi(\bar \SS_{k,+}^d)$ that is not of the form \eqref{eq:series-f}. 
  
Since an SPDF is automatically PDF, we now consider SPDFs given by the Fourier-Gegenbauer series 
\eqref{eq:FourierGegen} with $\hat f_n^\l \ge 0$, which can be written as in \eqref{eq:series_CF} with 
the sum over the index set $\CF =\{n \in \NN_0: \hat f_n >  0\}$ defined in \eqref{eq:CF}.

\begin{thm}\label{thm: SPDF-Sk}
Let $d\ge 2$ and $\l = \f{d-1}{2}$. Let $k$ be an integer such that $1\le k \le d+1$. A continuous function 
$f: [-1,1]\mapsto \RR$ in $\Phi(\SS^d)$ is strictly positive definite on $\SS_{k,+}^d$ if and only if the 
set $\CF$ satisfies 
\begin{itemize}
\item[(i)] $k=1:$ $\CF$ is infinite. 
\item[(i)] $k=2:$ $\CF$ is infinite and contains at least one even and one odd element. 
\item[(iii)] $2 \le k \le d+1:$ $\CF$ contains infinitely many even and infinitely many odd elements. 
\end{itemize}
\end{thm}

\begin{proof}
For $f[\Xi]$ to be positive definite, we need to consider {\it antipodal pairs} of points, which are two points $\xi_i$ 
and $\xi_j$ such that $\xi_i = - \xi_j$. If $\Xi_N$ contains the antipodal pairs, then the two rows of $f[\Xi_N]$ 
corresponding to the two points are the same if $f$ is an even function and they differ by a sign if $f$ is an odd 
function. In either case, the matrix  $f[\Xi_N]$ is not positive definite. For $k =1$, there is no antipodal pair in 
$\SS_1^{d} = \SS_+^{d}$. For $k =2$, there is exactly one pair, $e_1 = (1,0, \ldots, 0)$ and $- e_1$, in 
$\SS_2^{d}$. If $\CF$ contains no odd element, then $f$ is an even function, whereas if $\CF$ contains no 
even element, then $f$ is odd. Hence, $\CF$ must contain one even and one odd element for $k = 2$. 
For $k > 2$, there are infinitely many antipodal pairs in $\SS_k^d$.

We choose $\Xi_{2N} = \{x_1,\ldots, x_{2N}\}
\subset \SS_{k,+}^d$ so that if $x_i \in \Xi_{2N}$, then $-x_i \in \Xi_{2N}$. Writing $f  = f_e + f_o$, where $f_e$ is
the even part of $f_o$ is the odd part of $f$, we see that the same consideration implies that
$$
   \mathrm{rank}\, f_e[\Xi_{2N}] \le N \qquad \hbox{and} \qquad    \mathrm{rank}\, f_o[\Xi_{2N}] \le N. 
$$
Assume now that $\CF$ contains only finite many even elements. Let $M$ be the largest integer for which 
$\hat f_{2n}=\hat f_{2M} \ne 0$. Since $C_n^\l$ is a polynomial of degree $n$ and it has the same parity as $n$, we can write 
$$
  f(t) = \sum_{n =0}^M\hat f_{2n} C_{2n}^\l(t) = \sum_{n=0}^{M} b_n t^{2n} 
$$
for some set $\{b_n\}$ of coefficients with $b_0 \ne 0$. For $\ell \in \NN$, let $G_\ell$ be the matrix 
$G_\ell = [\la \xi_i, \xi_j \ra^\ell]_{i,j =1}^N$. For $\ell =1$, $G_\ell$ is a 
Gram matrix, which is nonnegative definite and $\mathrm{rank}\, G_1 \le d+1$, since $\xi_j \in \SS^d$.  
By Schur's product theorem, the matrices $G_\ell$ are also nonnegative definite and their ranks are bounded by
$\mathrm{rank}\, G_\ell \le (\mathrm{rank}\, G_1)^\ell \le (d+1)^\ell$. Consequently, since 
$\mathrm{rank}\, (A+ B) \le \mathrm{rank}\,A + \mathrm{rank}\,B$ for general matrices $A$ and $B$, we conclude that 
$$
  \mathrm{rank}\,f[\Xi_N] \le  \sum_{m=0}^{M/2} \mathrm{rank}\, G_{M-2m} \le \sum_{m=0}^{M/2} (d+1)^{M-2m}, 
$$  
which is independent of $N$. Thus, for $N > \sum_{m=0}^{M/2} (d+1)^{M-2m}$, the matrix $f[\Xi_N]$ does 
not have full rank and, hence, $f$ is not strictly positive definite. A similar proof applies to the case when $\CF$
contains only finitely many odd elements.

In the other direction, we assume that $f$ is a strictly positive definite function on $\SS_{k,+}^d$ with an infinite 
set $\CF$. For $k$ satisfying $1\le k \le d+1$, let $\Xi_N = \{\xi_1,\ldots, \xi_N\}$ be a set of $N$ distinct points 
in $\SS_{k,+}^d$. Since $\Xi_N$ is finite and contains no points that are on the boundary as $\SS_{k,+}^d$ is open, 
we can choose a $p \in \SS^d$ as the ``north pole" of a new coordinate system so that the set $\Xi_N$ is in 
$\SS_{k,+}^d$ using the new coordinate system. There are infinitely many such $p$. Since $\Xi_N$ is finite, we can
choose a $p$ such that it is not the normal vector of the plane spanned by $x_i$ and $x_j$ for all $1 \le i \ne j \le N$, 
which means that $\la p, \xi_i\ra \ne  \la p, \xi_j\ra$ if $i \ne j$ so that $\{\la p,\xi_j\ra: 1 \le i \le N\}$ forms a set 
of $N$ distinct numbers in $(-1,1)$. 
Without loss of generality, we assume $p = (0,\ldots,0,1)$. Thus, every point $\xi_i$ has a representation 
of the form 
$$
  \xi_i = (\xi_i' \sin \t_i, \cos\t_i), \qquad \xi_i' \in \sph, \quad \cos \t_i = \la p, \xi_i\ra. 
$$
By the addition formula \eqref{eq:addtionC} of the Gegenbauer polynomials, we conclude that 
$$
  C_n^\l(\la \xi_i,\xi_j\ra) = \sum_{k=0}^n b_{k,n}^\l Q_{k,n}^\l (\t_i) Q_{k,n}^\l (\t_j) C_k^{\l-\f12} (\la \xi_i', \xi_j'\ra),
$$
which implies, in particular, that for $\xi = (\xi' \sin\t, \cos \t) \in \SS^d$, 
$$
  \sum_{j=1}^N c_j f(\la x,x_j\ra) = \sum_{n=0}^\infty 
     \hat f_n^\l \left[\sum_{j=1}^N c_j Q_{k,n}^\l(\t_j)C_k^{\l-\f12}(\la \xi_j',\xi' \ra)\right] Q_{k,n}^\lambda (\t).
$$
Since $\SS_{k,+}^d \subset \SS^d$, it is known that the matrix $f[\Xi_N]$ is positive definite if and only if 
the set of functions $\{f(\la \xi,\xi_i\ra): 1 \le i\le N\}$ is linearly independent (cf.\ \cite[Theorem 14.3.5]{DaiX}). 
Consequently, it is sufficient to prove that if 
\begin{equation} \label{eq:indep}
\sum_{j=1}^N c_j Q_{k,n}^\l(\t_j)C_k^{\l-\f12}(\la \xi_j', \xi' \ra) =0, \quad 0 \le k \le n, \quad n \in \CF,
\end{equation}
then $c_1 = c_2 = \cdots =c_n =0$. This is proved by induction on $N$ just like the proof for 
Theorem \ref{thm:SPDF-Sphere}, so much so that, for each fixed $k$, the proof given in \cite[p. 380]{DaiX} 
can be followed almost verbatim for $\SS_{k,+}^d$. 
\end{proof}
 
For the strictly positive definite functions, we assume that $\SS_{k,+}^d$ is open so that $\Xi_N$ does not include points 
on the boundary. As an example, let us consider the data set $\Xi_N$ of $N$ points in the closure of $\SS_{2,+}^2$ that 
contains $M_1$ points of the form $(a_1,a_2,0)$ and $M_2$ points of the form $(a_1,0,a_3)$, which are on the boundary 
of $\SS_2^{2,+}$, and the coordinates vector $e_1, e_2, e_3$ of $\RR^3$. Because the set includes all three coordinate 
vectors, we cannot introduce another coordinate system and have the data set on $\SS_{2,+}^2$ in the new coordinate 
system. If we choose $p = e_3$, then all $M_1$ points of the form $(a_1,a_2,0)$ on the boundary will have $\la p, x\ra =0$, 
so that $\{\la p, x_j\ra: 1 \le j \le N\}$ is no longer a set of $N$ distinct points in $(-1,1)$. Consequently, the above proof cannot 
proceed as it is. We do not know if this restriction is essential or if it can be removed with a different proof. 

\section{Positive definite functions on several regular domains}
\setcounter{equation}{0} 

We consider PDFs when $\Omega$ is one of several regular domains, discussed in the consecutive subsections. 
They contain balls, surfaces, and solids of the hyperbolic set, which should be contrasted with the earlier work 
we cited in the introduction, much of which was on spheres. 

\subsection{PDF on the unit ball}
On the unit ball $\BB^d = \{x \in \RR^d: \|x\| \le 1\}$, the distance function is given by  
$$
 \sd_\BB(x,y) = \arccos \left(\la x, y \ra + \sqrt{1-\|x\|^2}  \sqrt{1-\|y\|^2} \right), 
$$
which is a projection of the geodesic distance of the hemisphere $\SS_{d+1,+}^d$ on $\BB^d$.
Thus, we see that a function $f: [-1,1] \mapsto \RR$ is a PDF on $\BB^d$ if 
$$
f[\Xi_N] = \left [f \left(\la x_i,x_j\ra+
  \sqrt{1-\|x_i\|^2} \sqrt{1-\|x_j\|^2} \right)\right]_{i,j=1}^N
$$
is nonnegative definite for all subsets $\Xi = \{x_1,\ldots, x_N\}
\subset \BB^d$.   (For the moment, we do not require these points in
the ball to be distinct yet, this will begin to play a r\^ole when we
shall speak about strict positive definiteness.)

It is well-known that there is a one-to-one correspondence between $\BB^d$ and the upper hemisphere
$\SS_{d+1,+}^d$ defined by 
\begin{equation} \label{eq:mapB-S+}
    \BB^d\ni x \mapsto  \left(x, \sqrt{1-\|x\|^2}\right) \in \SS_{d+1,+}^d,
\end{equation}
and the map preserves the distance between the points from the two domains; more precisely,
$$
   \sd_{\BB}(x,y) = \sd_{\SS} \left (\left(x, \sqrt{1-\|x\|^2}\right), \left(y, \sqrt{1-\|y\|^2}\right) \right). 
$$
Hence, the PDF functions on the unit ball are characterized by applying Theorem \ref{thm: PDF-Sk} with 
$\bar \SS_{d+1,+}^d$. Instead of stating the full analog of Theorem \ref{thm: PDF-Sk}, we state a simple 
version for the record. 

\begin{thm}\label{thm:PDF-Ball}
Let $d\ge 2$. All functions $f: [-1,1] \mapsto \RR$ in $\Phi(\BB^d)$ are given by the expansion  
$$
 f(\cos \t) = \sum_{n=0}^\infty a_n f_n^{\l} C_n^{\l} (\cos\t), \qquad a_n \ge 0, \quad \forall n \in \NN_0.
$$
Moreover, such an $f$ is in $\mathrm{S}\Phi(\mathring{\BB}^d)$ if and only if there are infinitely 
many positive $a_{2n}$ and infinitely many positive $a_{2n+1}$.  
\end{thm}

The ball is the first, and the simplest, of our regular domains $\Omega$. It is worthwhile to discuss some relevant 
aspects of the connection to the unit sphere in this simple case which
has been studied before. 

Under the mapping \eqref{eq:mapB-S+}, the surface measure $\d\s(x)$ on the unit sphere corresponds
to $W_0(x)\, \d x$ on the unit ball, where the positive function $W_0$ is the Chebyshev weight function 
$$
 W_0(x) = \frac{1}{\sqrt{1-\|x\|^2}}, \qquad x \in \BB^d, 
$$
on the unit ball. Let $\CV_n(\BB^d)$ be the space of orthogonal polynomials of degree $n$ with respect to 
$W_0$ on $\BB^d$. Let $\sP_n(W_0; \cdot,\cdot)$ be the reproducing kernel of $\CV_n(\BB^d)$, as defined
in Definition \ref{def:reprod}. 
This kernel satisfies a closed-form formula, again called an addition formula, given by (cf.\ \cite[Corollary 4.2.9]{DX})
\begin{align}\label{eq:additionB}
  \sP_n(W_0; x,y) = \frac12 \left[\sP_n^+ (W_0; x,y) + \sP_n^-(W_0; x,y) \right],
\end{align} 
where $\sP_n^\pm(W_0; \cdot,\cdot)$ is defined by 
$$
     \sP_n^\pm(W_0;x,y) = Z_n^{\f{d-1}{2}}\left(\la x,y\ra\pm \sqrt{1-\|x\|^2}\sqrt{1-\|y\|^2}\right). 
$$   
In particular, $\sP_n^+(W_0; x,y) = Z_n^{\f{d-1}{2}}(\cos \sd_\BB(x,y))$. It is not difficult to verify that the
reproducing property \eqref{eq:reprod} holds if $\sP_n(W_0;\cdot,\cdot)$ is replaced by either $\sP_n^+(W_0;\cdot,\cdot)$ or 
$\sP_n^-(W_0;\cdot,\cdot)$.

However, in order to be a reproducing kernel for $\CV_n(\BB^d)$, the kernel needs also
satisfy the item (2) of Definition \ref{def:reprod} which requires
that it be an element of $\CV_n(\BB^d)$  
as a function of $x$ for each $y$. While $\sP_n^{\pm}(W_0; \cdot,\cdot)$ does not satisfy the latter 
requirement, their sum $\sP_n(W_0; \cdot,\cdot)$ in
\eqref{eq:additionB} does.

Moreover, setting 
$$
\xi = \left(x,\sqrt{1-\|x\|^2}\right) \quad \hbox{and} \quad \eta^\pm  = \left(y, \pm \sqrt{1-\|y\|^2}\right),
$$
then the sum is the average of $Z_n^{\f{d-1}{2}}(\cos \sd_\SS(\xi, \eta^+))$ and 
$Z_n^{\f{d-1}{2}}(\cos \sd_\SS(\xi,\eta^-))$, where $\eta^-$ is the
reflection of $\eta^+$ over the plane defined by the condition  
$\xi_{d+1} =0$. This connection of PDF to the reproducing kernel will appear for the other domains 
that we shall discuss below as well. 

The ball $\BB^d$ is the projection of the upper hemisphere $\SS_{d+1,+}^d$ under the mapping \eqref{eq:mapB-S+}. 
We can also work with the lower hemisphere $\SS_{d+1,-} = \{\xi \in \SS^d: \xi_{d+1} < 0\}$ using the map
$$
  \SS^d \ni x\mapsto \left(x, - \sqrt{1-\|x\|_2}\right) \in \SS_{d+1,-}^d,
$$
which again preserves the distance between the points from the two sets.

It may be tempting to map a set of points on $\BB^d$ to $\SS^d$ by using both mappings at the same time. 
For example, if $\Xi_{2N}$ is a set of $2N$ distinct points in $\BB^d$, such that $\Xi_{2N} = \Xi_{N}^+ \cup \Xi_{N}^-$ 
and $\Xi_{N}^+ = - \Xi_{N}^-$, then we can map the points in $\Xi_N^+$ to the upper hemisphere and points in 
$\Xi_N^-$ to the lower hemisphere.

However, although both maps to hemispheres preserve the distance, the total 
collection of points does not. Indeed, if $x_i \mapsto X_i = (x_i,\sqrt{1-\|x_i\|^2}) \in \SS_{d+1,+}^d$ and 
$x_j\mapsto X_j = (x_j, - \sqrt{1-\|x_j\|^2}) \in \SS_{d+1,-}^d$ are
different points, then 
$$
\cos \sd_\SS(X_i,X_j) = \la x_i, x_j\ra - \sqrt{1-\|x_i\|^2}\sqrt{1-\|x_j\|^2} \ne \cos \sd_\BB(x_i,x_j). 
$$

\subsection{PDF on the hyperbolic surface}
Up to an affine change of variable, the hyperbolic surface that we
shall consider in this subsection can be normalized to
satisfy, for a constant $\varrho \ge 0$, 
$$
  {}_{\varrho}\XX_0^{d+1} :=\left \{(x,t): \|x\| = \sqrt{t^2 - \varrho^2}, \quad  
       x \in \RR^d,\, \varrho \le |t| \le \sqrt{1+\varrho^2}\right \},
$$
which degenerates to a double cone if $\varrho = 0$.  The surface is naturally divided into upper and lower
hyperbolic surfaces, which we denote by 
$$
   {}_{\varrho}\XX_0^{d+1} =  {}_{\varrho}\XX_{0,+}^{d+1} \cup  {}_{\varrho}\XX_{0,-}^{d+1}, \quad\hbox{where}\quad
    {}_{\varrho}\XX_{0,\pm}^{d+1} =  \left \{ (x,t) \in {}_{\varrho}\XX_0^{d+1}: \mathrm{sign} (t) = \pm 1\right\}.
$$
For $\varrho > 0$, the upper and lower parts are disjoint, so we only consider the distance for points in 
either the upper or the lower hyperbolic surfaces. For $(x,t), (y,s)$ both in $ {}_{\varrho}\XX_{0,+}^{d+1}$ or
both in ${}_{\varrho}\XX_{0,-}^{d+1}$,  their distance is defined by \cite{X22}
$$
  \sd_{\XX_0}^\varrho \big( (x,t), (y,s) \big) = \arccos \left(\la x,y \ra + \sqrt{1+\rho^2 - t^2}  \sqrt{1+ \varrho^2 - s^2} \right).  
$$
In particular, for $\Xi_N = \{(x_j,t_j): 1 \le i \le N\} \subset {}_{\varrho}\XX_0^{d+1}$, the matrix $f[\Xi_N]$ is
given by 
$$
     f[\Xi_N] = \left [f \left(\la x_i,x_j\ra+ \sqrt{1+\varrho^2- t_i^2} \sqrt{1+ \varrho^2 - t_j^2} \right)\right]_{i,j=1}^N. 
$$

Like the case of the unit ball, there is a one-to-one correspondence between ${}_\varrho\XX_{0,+}^{d+1}$ and 
the upper hemisphere $\SS_{d+1,+}^d$, defined by
\begin{equation} \label{eq:mapX0-S+}
     {}_{\varrho}\XX_{0,+}^{d+1} \ni (x,t) \mapsto \left(x, \sqrt{1+\varrho^2 - t^2} \right) \in \SS_{d+1,+}^d,
\end{equation}
that preserves the distance on the domains; that is, 
$$
  \sd_{\XX_0}^\varrho\big((x,t), (y,s)\big)= \sd_{\SS} \left( \left(x, \sqrt{1+\varrho^2-t^2} \right),  
      \left(y, \sqrt{1+ \varrho^2 -s^2} \right)\right). 
$$
Moreover, the same mapping also defines a correspondence between $\XX_{0,-}^{d+1}$ and $\SS_{d+1,+}^d$. 

As a result, we obtain a characterization of the positive definite 
functions on the upper, or lower, hyperbolic surface from Theorem \ref{thm: PDF-Sk}. As in the case of the unit 
ball, we state a simplified version for the record. 

\begin{thm}\label{thm:PDF-X0}
Let $d\ge 2$. All functions $f: [-1,1] \mapsto \RR$ in $\Phi({}_\varrho\XX_{0, \pm}^{d+1})$ are given by the expansion  
$$
 f(\cos \t) = \sum_{n=0}^\infty a_n C_n^{\f{d-1}2} (\cos\t), \qquad a_n \ge 0, \quad \forall n \in \NN_0.
$$
Moreover, such an $f$ is in $\mathrm{S}\Phi(\BB^d)$ if and only 
if there are infinitely many positive $a_{2n}$ and infinitely many positive $a_{2n+1}$.  
\end{thm}
 
Under the mapping \eqref{eq:mapX0-S+}, the surface measure $\d\s(t)$ on $\SS^d$ is mapped to  
$$
w_0(t)\, \d \s_{\XX_0} (x,t),
$$ 
where $w_0$ is the Chebyshev weight $w_0(t) = 1/\sqrt{1-t^2}$ and $\d\s_{\XX_0}$ denotes the Lebesgue 
measure on ${}_\varrho\XX_0^{d+1}$. The orthogonal polynomials in $L^2({}_\varrho\XX_0^{d+1}, w_0)$ 
are divided into two families, one contains those that are even in the
$t$ variable and the other contains those that are odd in the $t$ variable \cite{X21a}. Let 
$\CV_n^E({}_\varrho\XX_0^{d+1})$ be the space of orthogonal polynomials of degree $n$ that are even
in the $t$ variable. An explicit orthogonal basis of the space is given in \cite{X21a}. Let $\sP_n^E(w_0; \cdot,\cdot)$ 
be the reproducing kernel of $\CV_n^E({}_\varrho\XX_0^{d+1})$ in $L^2({}_\varrho\XX_0^{d+1}, w_0)$. The 
kernel satisfies an addition formula given by \cite[(3.5)]{X21a} 
\begin{align}\label{eq:additionX0}
  \sP_n^E\big(w_0; (x,t), (y,s)\big) = \frac12 \left[\sP_n^+\big(w_0; (x,t), (y,s)\big) + \sP_n^-\big(w_0; (x,t), (y,s)\big) \right],
\end{align} 
where $\sP_n^\pm\big(w_0; \cdot,\cdot\big)$ is defined by 
$$
\sP_n^\pm \big(w_0; (x,t), (y,s)\big) = Z_n^{\f{d-1}{2}}\left(\la x,y\ra \mathrm{\,sign}(t s) \pm \sqrt{1- t^2}\sqrt{1-s^2}\right).
$$   
If $(x,t)$ and $(y,s)$ are both in ${}_\varrho\XX_{0,+}^{d+1}$ or both in ${}_\varrho\XX_{0,-}^{d+1}$, then
$\sP_n^+ \big(w_0; (x,t), (y,s)\big) = Z_n^{\f{d-1}{2}}\big(\sd_{\XX_0}^\varrho((x,t),(y,s))\big)$. Like the case 
in the unit ball, $\sP_n^\pm (w_0;\cdot,\cdot)$ satisfies the reproducing property, but it is not a polynomial, 
whereas the sum in \eqref{eq:additionX0} is. 

\subsection{PDF on the solid hyperboloid}
For $\varrho \ge 0$, the solid hyperboloid is bounded by the
hyperbolic surface $ {}_{\varrho}\XX_0^{d+1}$ and  
hyperplanes defined by the condition $t = \pm \sqrt{1+\varrho^2}$, 
$$
  {}_{\varrho}\XX^{d+1} :=\left \{(x,t): \|x\| \le \sqrt{t^2 - \varrho^2}, \quad  
        x \in \RR^d,\, \varrho \le |t| \le \sqrt{1+\varrho^2}\right \},
$$
which degenerates to the double cone if $\varrho = 0$. The hyperboloid is naturally divided into upper and lower 
part, which we denote by 
$$
   {}_{\varrho}\XX^{d+1} =  {}_{\varrho}\XX_{+}^{d+1} \cup  {}_{\varrho}\XX_{-}^{d+1}, \quad\hbox{where}\quad
    {}_{\varrho}\XX_{\pm}^{d+1} =  \left \{ (x,t) \in {}_\varrho\XX^{d+1}: \mathrm{sign} (t) = \pm 1\right\}.
$$
Like the case of hyperbolic surface, we only consider the distance for points in either the upper or the lower 
hyperboliod. For $(x,t), (y,s)$ both in $ {}_{\varrho}\XX_{+}^{d+1}$ or both in ${}_{\varrho}\XX_{-}^{d+1}$, 
 their distance is defined by \cite[Definition 4.1 and (4.6)]{X22}
\begin{align*}
  \sd_{\XX}^\varrho \big( (x,t), (y,s) \big)
     = \arccos &\left(\la x,y \ra + \sqrt{t^2-\varrho^2-\|x\|^2}\sqrt{s^2-\varrho^2-\|y\|^2} \right. \\
                     & \qquad\qquad\qquad\qquad  \left. + \sqrt{1+\rho^2 - t^2}  \sqrt{1+ \varrho^2 - s^2} \right),
\end{align*}
which is a mixture of the distances on the unit ball and on the hyperbolic surface. There is a one-to-one  
correspondence between ${}_\varrho\XX_{+}^{d+1}$ and $\SS_{d,+}^d =\{\xi \in \SS^d: \xi_d >0, \xi_{d+1} > 0\}$, defined by
\begin{equation} \label{eq:mapX-S+}
    {}_{\varrho}\XX_{+}^{d+1} \ni (x,t)\mapsto \left(x, \sqrt{t^2 - \varrho^2-\|x\|^2}, \sqrt{1+\varrho^2 - t^2} \right)=: \xi(x,t) 
      \in \SS_{d,+}^d.
\end{equation}
This preserves the distance between the points in the domains; that is, 
$$
  \sd_{\XX}^\varrho\big((x,t), (y,s)\big)= \sd_{\SS} \big(\xi(x,t) , \xi(y,s)\big). 
$$
Likewise, we can define an analog correspondence between ${}_{\varrho}\XX_{-}^{d+1}$ and $\SS_{d,+}^d$. 
Consequently, we obtain a characterization of the positive definite functions on the upper, or lower, hyperboloid
from Theorem \ref{thm: PDF-Sk} for $\SS_{d,+}^d$. Again, we record a simplified version of the characterization. 

\begin{thm}\label{thm:PDF-X}
Let $d\ge 1$. All functions $f: [-1,1] \mapsto \RR$ given by the expansion  
$$
 f(\cos \t) = \sum_{n=0}^\infty a_n C_n^{\f{d-1}2} (\cos\t), \qquad a_n \ge 0, \quad \forall n \in \NN_0,
$$
are in $\Phi({}_\varrho\XX_{\pm}^{d+1})$. And such an $f$ is in $\mathrm{S}\Phi({}_\varrho\XX_{\pm}^{d+1})$ 
if and only if  
\begin{itemize}
\item[(i)] $d=2:$ there are infinite many positive $a_n$ with at least one $a_{2n}$ and at least one $a_{2n+1}$. 
\item[(ii)] $d > 2:$ there are infinite many positive $a_{2n}$ and infinitely many positive $a_{2n+1}$. 
\end{itemize}
\end{thm}
 
Under the mapping \eqref{eq:mapX-S+}, the measure on ${}_\varrho\XX_{+}^{d+1}$ that corresponds to 
the surface measure $\d\s$ on $\SS^d$ is $W_{0,0}(x,t)\, \d x \,\d t$, where $W_{0,0}$ is given by 
$$
   W_{0,0}(x,t) = \frac{|t|}{\sqrt{1- t^2} \sqrt{1-\|x\|^2}}. 
$$
As in the case of hyperbolic surface, the orthogonal polynomials in $L^2({}_\varrho\XX^{d+1}, W_{0,0})$ 
are divided into two families, depending on their parity in the $t$ variable. Let $\CV_n^E({}_\varrho\XX^{d+1})$
be the space of orthogonal polynomials of degree $n$ that are even in the $t$ variable. Then the 
reproducing kernel $\sP_n^E(W_{0,0}; \cdot,\cdot)$ of $\CV_n^E({}_\varrho\XX^{d+1})$ satisfies an
addition formula given by \cite[Corollary 5.6 and (5.16)]{X21a} 
\begin{align}\label{eq:additionX}
\sP_n^E\big(W_{0,0}; (x,t), (y,s)\big) = \frac14 & \left[\sP_n^{+,+}\big((x,t), (y,s)\big) + \sP_n^{+,-}\big((x,t), (y,s)\big)\right.\\
         &  \left. + \sP_n^{-,+}\big((x,t), (y,s)\big) + \sP_n^{-,-}\big((x,t), (y,s)\big)\right], \notag
\end{align} 
where $\sP_n^{\pm, \pm}(\cdot,\cdot)$ is defined by 
\begin{align*}
\sP_n^{\pm,\pm} \big((x,t), (y,s)\big)  & = Z_n^{\f{d-1}{2}}\left(\left(\la x,y\ra \pm \sqrt{t^2-\varrho^2-\|x\|^2}
       \sqrt{s^2-\varrho^2-\|y\|^2}\right) \mathrm{sign}(t s) \right.  \\
   & \hskip 1.9in  \left.   \pm \sqrt{1+\varrho^2- t^2}\sqrt{1+ \varrho^2-s^2}\right).
\end{align*}   
As in the previous cases, the kernel $\sP_n^{+,+}(\cdot,\cdot)$ is the
same as $Z_n^{\f{d-1}{2}}(\cos  \sd_{\XX}^\varrho((x,t),(y,s)))$
so that it is a positive definite kernel on the upper, or lower, hyperboloid ${}_\varrho\XX_{\pm}^{d+1}$, and it 
satisfies the reproducing property. The reproducing kernel given in \eqref{eq:additionX} comes from the average 
that makes it a kernel of polynomials. Furthermore, the average is over $Z_n^{\f{d-1}{2}}\big(\cos  \sd_{\SS}^\varrho(\xi(x,t),\xi^{\pm,\pm}(y,s))\big)$, where $\xi^{\pm,\pm} = (\xi_1,\ldots, \xi_{d-1}, \pm \xi_d, \pm \xi_{d+1})$ are reflections
of $\xi$ with respect to the hyperplanes $\xi_d =0$ and $\xi_{x+1} = 0$.  

\subsection{PDF on the conic surface}
Up to an affine transformation, we consider the conic surface defined by 
$$
  \VV_0^{d+1} := \left \{(x,t): \|x\| = t, \;  x \in \BB^d, \; t \in [0,1]\right \}. 
$$
The distance function in this domain is defined by  \cite[(4.1)]{X21b}
$$
 \sd_{\VV_0}\big((x,t),(y,s)\big) = \arccos \left( \sqrt{\frac{ts + \la x,y \ra}{2}} + \sqrt{1- t}  \sqrt{1- s} \right),
$$
which is different from that on the double cone ${}_0\XX_0^{d+1}$. We do not know if there is a 
distance preserving map from $\VV_0^{d+1}$ to a subset of $\SS^d$ for $d > 2$. For $d=2$, however,
we have such a map between $\VV_0^3$ and $\SS_{2,+}^2$. The latter is given by
$$
     \SS_{2,+}^2 = \{(\xi_1,\xi_2,\xi_3)\in\SS^2: \xi_2 > 0, \xi_3 > 0\},
$$
which is the half of the upper hemisphere in $\RR^3$.

\begin{prop}
The mapping $\xi: \VV_0^3 \mapsto \SS_{2,+}^2$ defined by 
\begin{equation}\label{eq:V0-Smap}
\xi: (x_1,x_2,t) \in \VV_0^3 \mapsto \left (\sqrt{\frac{t-x_1}2}\, \mathrm{sign}(x_2), \sqrt{\frac{t+x_1}2}, \sqrt{1-t} \right)
   = :\xi(x,t)  \in \SS^2_{2,+}
\end{equation}
is a one-to-one correspondence if $x_2 \ne 0$ and it preserves the distances between the points of the domains; that is,
\begin{equation*} 
   \sd_{\VV_0} ((x,t),(y,s)) = \arccos \left(\la \xi(x,t), \xi(y,s)\ra\right ) =  \sd_{\SS} \left(\xi(x,t),\xi(y,s)\right). 
\end{equation*}
\end{prop}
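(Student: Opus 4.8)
The plan is to establish the three claims in order: that $\xi$ maps $\VV_0^3$ into $\SS^2_{2,+}$, that it is a bijection, and that it carries $\sd_{\VV_0}$ onto $\sd_\SS$; the distance statement is the substantive one, the first two being routine verifications. The first claim is immediate: if $(x_1,x_2,t)\in\VV_0^3$ then $x_1^2+x_2^2=t^2$ and $0\le t\le 1$, so $|x_1|\le t$ and $t\pm x_1\ge 0$, making the radicands legitimate, while $\tfrac{t-x_1}2+\tfrac{t+x_1}2+(1-t)=1$ puts $\xi(x,t)$ on $\SS^2$, and its last two coordinates $\sqrt{\tfrac{t+x_1}2}$ and $\sqrt{1-t}$ are $\ge 0$, so $\xi(x,t)\in\SS^2_{2,+}$.

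For the bijection I would exhibit the inverse. Given $\eta=(\eta_1,\eta_2,\eta_3)\in\SS^2_{2,+}$, put $t:=1-\eta_3^2$, $x_1:=\eta_2^2-\eta_1^2$, and $x_2:=\mathrm{sign}(\eta_1)\sqrt{t^2-x_1^2}$. From $\eta_1^2+\eta_2^2=1-\eta_3^2=t$ one gets $|x_1|\le t\le 1$ and $x_1^2+x_2^2=t^2$, so $(x,t)\in\VV_0^3$, and substituting back gives $\tfrac{t-x_1}2=\eta_1^2$, $\tfrac{t+x_1}2=\eta_2^2$, $1-t=\eta_3^2$, and (away from the degenerate locus) $\mathrm{sign}(x_2)=\mathrm{sign}(\eta_1)$, whence $\xi(x,t)=\eta$; injectivity follows since the same formulas recover $t$, then $x_1$, then $|x_2|$ and $\mathrm{sign}(x_2)$ from $\xi(x,t)$. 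The degenerate configurations $x_2=0$ and $x_1=\pm t$ (equivalently $\eta_1=0$ or $\eta_2=0$) I would treat by direct inspection, fixing a value of $\mathrm{sign}(0)$ so as to keep the correspondence single valued there.

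The distance claim reduces, since $\sd_\SS(\zeta,\zeta')=\arccos\langle\zeta,\zeta'\rangle$, to proving the identity $\langle\xi(x,t),\xi(y,s)\rangle=\cos\sd_{\VV_0}((x,t),(y,s))$. Splitting off the third coordinate, $\sqrt{1-t}\,\sqrt{1-s}=\sqrt{(1-t)(1-s)}$ matches one term of $\cos\sd_{\VV_0}$, so it remains to show
$$
\mathrm{sign}(x_2)\mathrm{sign}(y_2)\sqrt{\tfrac{(t-x_1)(s-y_1)}4}+\sqrt{\tfrac{(t+x_1)(s+y_1)}4}=\sqrt{\tfrac{ts+\langle x,y\rangle}2}.
$$
Squaring the left-hand side, the two square terms add up to $\tfrac{(t-x_1)(s-y_1)+(t+x_1)(s+y_1)}4=\tfrac{ts+x_1y_1}2$, while the cross term, using $t^2-x_1^2=x_2^2$, $s^2-y_1^2=y_2^2$ and $\mathrm{sign}(x_2)\,|x_2|=x_2$, collapses to $\tfrac{x_2y_2}2$; thus the left-hand side squared equals $\tfrac{ts+x_1y_1+x_2y_2}2=\tfrac{ts+\langle x,y\rangle}2$, which is nonnegative by Cauchy--Schwarz. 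Extracting the square root then completes the identity, and with it the proposition.

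The step I expect to be the real obstacle is the sign in that last square-root extraction: one must know that the left-hand side of the displayed equation is itself nonnegative, not merely that its square is. This is immediate when $\mathrm{sign}(x_2)\mathrm{sign}(y_2)\ge 0$, but when $x_2$ and $y_2$ have opposite signs it amounts to the inequality $ty_1+sx_1\ge 0$, which is not an algebraic triviality. I would settle it by passing to polar form on the base circle, $x=t(\cos\phi,\sin\phi)$, $y=s(\cos\psi,\sin\psi)$ with $\phi,\psi$ restricted to a fixed fundamental interval; there $\xi(x,t)=\bigl(\sqrt t\,\sin\tfrac\phi2,\;\sqrt t\,\cos\tfrac\phi2,\;\sqrt{1-t}\bigr)$, so the left-hand side of the display equals $\sqrt{ts}\,\cos\tfrac{\phi-\psi}2$ while $\cos\sd_{\VV_0}-\sqrt{(1-t)(1-s)}=\sqrt{ts}\,\bigl|\cos\tfrac{\phi-\psi}2\bigr|$, and everything reduces to verifying that the angular representatives supplied by $\mathrm{sign}(x_2)$ keep $\tfrac{\phi-\psi}2$ within the range on which the cosine is nonnegative. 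Making that last reduction precise is the delicate point of the proof.
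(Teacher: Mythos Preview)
Your approach coincides with the paper's: check the image lies in $\SS^2_{2,+}$, exhibit an inverse (the paper gives the closed form $(x_1,x_2,t)=(\xi_2^2-\xi_1^2,\,2\xi_1\xi_2,\,1-\xi_3^2)$, equivalent to yours since $2\xi_1\xi_2=\mathrm{sign}(\xi_1)\sqrt{t^2-x_1^2}$ when $\xi_2\ge0$), and reduce the distance claim to the squared identity you wrote down. The paper simply records that identity and stops.

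Your worry about the sign in the square-root extraction is not just a delicacy to be tidied up: the stated distance equality is actually false, so your proposed polar fix cannot succeed. Take $t=s=1$, $x=\bigl(-\tfrac12,\tfrac{\sqrt3}{2}\bigr)$, $y=\bigl(-\tfrac12,-\tfrac{\sqrt3}{2}\bigr)$ on $\VV_0^3$. Then $\xi(x,1)=\bigl(\tfrac{\sqrt3}{2},\tfrac12,0\bigr)$, $\xi(y,1)=\bigl(-\tfrac{\sqrt3}{2},\tfrac12,0\bigr)$, so $\langle\xi(x,1),\xi(y,1)\rangle=-\tfrac12$; but $\langle x,y\rangle=-\tfrac12$ gives $\cos\sd_{\VV_0}=\sqrt{(1-\tfrac12)/2}=\tfrac12$. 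In your polar picture this is $\phi=\tfrac{2\pi}{3}$, $\psi=-\tfrac{2\pi}{3}$, so $\tfrac{\phi-\psi}{2}=\tfrac{2\pi}{3}$ lies outside $[-\tfrac\pi2,\tfrac\pi2]$; more generally the left side of your displayed equation is negative precisely when $\mathrm{sign}(x_2y_2)=-1$ and $ty_1+sx_1<0$, and no convention for the angular representatives avoids this region. Thus the paper's own proof carries the very gap you flagged, and the Proposition as stated does not hold.
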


\begin{proof}
It is easy to verify that $\xi_{x,t}\in \SS_{2,+}^2$. The inverse of the map is given by
$$
 \SS_{2,+}^2\ni\xi  \mapsto \left(\xi_2^2 - \xi_1^2, 2\xi_1\xi_2, 1-\xi_3^2\right) = (x_1,x_2,t) \in \VV_0^3. 
$$
To see that the mapping preserves the distance, we use the following identity 
$$
\f14 \left(\sqrt{(t+x_1)(s+y_1)} +\mathrm{sign}(x_2 y_2) \sqrt{(t-x_1)(s-y_1)}\right)^2 = \frac12(ts+x_1y_1 +x_2 y_2), 
$$
which was used to prove that $\sd_{\VV_0}$ is indeed a distance on $\VV_0^3$ in \cite[Proposition 4.2]{X21b},
where, however, the factor $\f14$ was missing.  
\end{proof}

A point on $\VV_0^3$ has three components, which we write as $(x,y,t)$ for $(x, y) \in \BB^2$ and $t \in [0,1]$.
For a set of points $\Xi_N = \{(x_i, y_i, t_i), 1 \le i \le N\}$ in $\VV_0^3$, the matrix $f[\Xi_N]$ is given by
$$
   f[\Xi_N] = \left[ \sqrt{\frac{t_i t_j + x_i x_j + y_i y_j}{2}} + \sqrt{1- t_i}  \sqrt{1- t_j} \right]_{i,j=1}^N. 
$$
Note that the elements of the matrix are all nonnegative. Since $\cos \d_{\VV_0}((x,t),(y,s)) \ge 0$, 
we only need the function $f$ on $[0,1]$. Instead, we shall assume that $f$ is an even function on $[-1,1]$. 
Then $f$ can be expanded as a series of $C_n^{\f12}(t) = P_n(t)$, where the $P_n$ are the Legendre 
polynomials. Using the distance preserving correspondence of $\VV_0^3$ and $\SS_{2,+}^2$, we can
derive the characterization of PDFs on $\VV_0^3$ from Theorem \ref{thm: PDF-Sk} with $\SS^2_{2,+}$. 

\begin{thm}\label{thm:PDF-V0}
 All functions $f: [-1,1] \mapsto \RR$ that are even on $[-1,1]$ and given by the expansion  
$$
 f(\cos \t) = \sum_{n=0}^\infty a_{2n} P_{2n} (\cos\t), \qquad a_{2n} \ge 0, \quad \forall n \in \NN_0,
$$
are in $\Phi(\VV_0^3)$. Moreover, such an $f$ is in $\mathrm{S}\Phi(\VV^3)$ if and only if there are infinitely 
many positive $a_{2n}$.
\end{thm}

On the $\VV_0^{d+1}$, the Chebyshev weight function is $w_{-1}$, defined by
$$
w_{-1} (t) = t^{-1}(1-t)^{-\f12}, \qquad  0 \le t \le 1.
$$
The orthogonal polynomials with respect to $w_{-1}(t)\, \d\s_{\VV_0}(x,t)$ are studied in \cite{X20}. Let 
$\CV_n(\VV_0^3)$ be the space of orthogonal polynomials of degree $n$ in $L^2(\VV_0^3, w_{-1})$. 
The reproducing kernel $\sP_n(w_{-1}; \cdot,\cdot)$ of the space $\CV_n(\VV_0^3)$ satisfies an 
addition formula given by \cite[(8.6)]{X20}, 
\begin{align}\label{eq:additionV0}
\sP_n\big(w_{-1}; (x,t), (y,s)\big) = \frac14 & \left[\sP_n^{+,+}\big((x,t), (y,s)\big) + \sP_n^{+,-}\big((x,t), (y,s)\big)\right.\\
         &  \left.{} + \sP_n^{-,+}\big((x,t), (y,s)\big) + \sP_n^{-,-}\big((x,t), (y,s)\big)\right], \notag
\end{align} 
where $\sP_n^{\pm, \pm}\big(\cdot,\cdot\big)$ is defined by, with $x = (x_1,x_2)$ and $y =(y_1,y_2)$,
\begin{align*}
\sP_n^{\pm,\pm} \big((x,t), (y,s)\big) = Z_{2n}^{\f{1}{2}} \left(\sqrt{ \frac{ts \pm (x_1 y_1 + x_2 y_2)}{2}}
      \pm \sqrt{1-t}\sqrt{1-s} \right). 
\end{align*}   
Recall that $Z_{n}^{\f12} (t)= (2n+1) C_n^{\f12} = (2n+1)P_n(t)$. 
Thus, once again, we see that $\sP_n^{+,+}(\cdot,\cdot)$ is a positive definite kernel on $\VV_0^3$ and 
the sum in \eqref{eq:additionV0} is an average that makes it a kernel of polynomials. Moreover, for each 
fixed $(x,t)$, the average is taken over $Z_{n}^{\f12}\big(\cos \sd_{\SS}(\xi(x,t)), \xi^{\pm} (y,s)\big)$, 
where $\xi^{\pm} = (\xi_1, \pm \xi_2, \pm \xi_3)$. 

Finally, let us mention that we do not know how to characterize the PDFs on the solid cone $\VV^3$ bounded
by $\VV_0^3$ and the hyperplane $t=1$. Making use of the characterization of $\Phi(\VV_0^3)$, we can 
obtain a characterization of $\Phi(\VV_0^2)$. However, $\VV_0^2$ is a triangle, which becomes the triangle
$\TT^2$ in the next subsection by an affine transformation. 

\subsection{PDF on the simplex}
The simplex $\TT^d$ of $\RR^d$ is defined by 
$$
  \TT^d = \left \{x \in \RR^d: x_1 \ge 0,\ldots, x_d \ge 0,\, |x| \le 1 \right\},
$$
where $|x| =x_1+\cdots + x_d$. The distance function on this domain is defined by  
$$
 \sd_\TT(x,y) = \arccos \left(\left\langle \sqrt{x}, \sqrt{y} \right \rangle + \sqrt{1- |x|}  \sqrt{1- |y|} \right),
$$
where $\sqrt{x} := (\sqrt{x_1}, \ldots, \sqrt{x_d})$. For a set of
points $\Xi_N = \{x_i: 1 \le i \le N\}$ in $\TT^d$,  
the matrix $f[\Xi_N]$ is then given by 
$$
   f[\Xi_N] = \left[ \left\langle \sqrt{x_i}, \sqrt{x_j} \right \rangle + \sqrt{1- |x_i|}  \sqrt{1- |x_j|}  \right]_{i,j=1}^N, 
$$
which is a matrix that has all of its elements nonnegative. It is known that there is a one-to-one correspondence
between $\TT^d$ and the positive quadrant of the unit sphere $\SS_{1,+}^d$, which is defined by 
$$
 \TT^d \ni x \mapsto \left(\sqrt{x_1}, \ldots, \sqrt{x_d}, \sqrt{1-|x|} \right)  = T(x) \in \SS_{1,+}^d
$$
and it preserves the distance between the points of two domains: 
$$
  \sd_\TT(x,y) = \arccos \la T(x), T(y)\ra =  \sd_{\SS} \left(T(x),T(y)\right).
$$
Hence, the characterization of PDF on the simplex follows from Theorem
\ref{thm: PDF-Sk}. As is the
case on the conic surface, $\cos \sd_\TT(x,y)$ is nonnegative so we only need $f$ to be supported
on $[0,1]$, which we extended evenly to $[-1,1]$. As in all previous cases in the section, we state a 
simplified version for the record. 

\begin{thm}\label{thm:PDF-T}
All functions $f: [-1,1] \mapsto \RR$ that are even on $[-1,1]$ and given by the expansion  
$$
 f(\cos \t) = \sum_{n=0}^\infty a_{2n}  C_{2n}^{\f{d-1}{2}}(\cos\t), \qquad a_{2n} \ge 0, \quad \forall n \in \NN_0,
$$
are in $\Phi(\TT^d)$. Moreover, such an $f$ is in $\mathrm{S}\Phi(\TT^d)$ if and only if there are 
infinitely many positive $a_{2n}$.
\end{thm}

The weight function on the simplex $\TT^d$ that corresponds to the surface measure is the Chebyshev weight 
function 
$$
  W_0(x) = \frac{1}{\sqrt{x_1 \cdots x_d (1-|x|)}}.
$$
Let $\CV_n(\TT^d)$ be the space of orthogonal polynomials in $L^2(\TT^d, W_0)$. Then the reproducing 
kernel $\sP_n(W_0;\cdot, \cdot)$ satisfies an addition formula given by \cite[(4.4.5)]{DX}
$$
 \sP_n(W_0; x,y) = \frac1{2^d} \sum_{\ve \in \{1,-1\}^d} Z_{2n}^{\frac{d-1}2}\left( \left \langle \sqrt{x}, \ve \sqrt{y} \right \rangle
        + \sqrt{1-|x|}\sqrt{1-|y|^2}\right).   
$$
As in all previous cases, the kernel is derived from the positive definite kernel $$Z_{2n}^{\frac{d-1}2}(\cos \sd_\TT(x,y))$$
by taking an average that ensures it is a kernel of polynomials. Moreover, the average is taken over
$Z_{2n}^{\frac{d-1}2}(\cos \sd_\TT(x, \ve y))$, where $\ve y$ is the reflection of $y$ over a coordinate plane.

\end{document}